\theoremstyle{definition}
\newtheorem{thm}{Theorem}[section]
\newtheorem{prop}[thm]{Proposition}
\newtheorem{lem}[thm]{Lemma}
\newtheorem{rem}[thm]{Remark}
\newtheorem*{ack}{Acknowledgement}
\numberwithin{equation}{section}
\newcommand{\Hom}{{\rm Hom}}
\newcommand{\Coh}{{\rm Coh}}
\newcommand{\Rep}{{\rm Rep}_k}
\newcommand{\Mod}{{\rm Mod}}
\newcommand{\Qcoh}{{\rm Qcoh}}
\newcommand{\Aut}{{\rm Aut}}
\newcommand{\Dim}{{\rm dim}}
\newcommand{\uni}{{\rm uni}}
\newcommand{\Gal}{{\rm Gal}}
\newcommand{\calO}{\mathcal{O}}
\newcommand{\calC}{\mathcal{C}}
\newcommand{\varx}{\overline{x}}
\newcommand{\Def}{\overset{{\rm def}}{=}}
\newcommand{\ab}{{\rm ab}}
\newcommand{\llangle}{\langle\langle}
\newcommand{\rrangle}{\rangle\rangle}
\newcommand{\Lie}{{\rm Lie}}
\newcommand{\frakm}{\mathfrak{m}}
\title{Semifinite Bundles and the Chevalley-Weil Formula}
\author{Shusuke Otabe
\footnote{Mathematical Institute, Tohoku University, 6-3 Aramakiaza, Aoba, Sendai, Miyagi 980-8578, Japan;
E-mail: {sb4m06@math.tohoku.ac.jp}
}}
\date{\vspace{-12mm}}
\begin{document}

\maketitle

\begin{abstract}
In our previous paper, we studied the category of semifinite bundles on a proper variety defined over a field of characteristic 0. As a result, we obtained the fact that for a smooth projective curve defined over an algebraically closed field of characteristic 0 with genus $g>1$,  Nori fundamental group acts faithfully on the unipotent fundamental group of its universal covering. However, it was not mentioned about any explicit module structure. In this paper, we prove that the Chevalley-Weil formula gives a description of it.
\end{abstract}

\thispagestyle{empty}

\section{Introduction}

In \cite{we38}, Weil proved that if a vector bundle $E$ on a compact Riemann surface $X$ can be trivialized by some finite unramified covering $Y\to X$, then there exist two different nonzero polynomials $f\neq g\in\mathbb{N}[t]$ satisfying $f(E)\simeq g(E)$.  Vector bundles having the latter property are called \textit{Weil-finite bundles}, or simply \textit{finite bundles}. In \cite{no76}, Nori proved that the converse is also true, i.e., that any finite bundle can be trivialized by some finite unramified covering. In fact, he established this correspondence in more general setting. Now we recall his idea in the case where $X$ is a proper smooth connected scheme over an algebraically closed field $k$ of characteristic $0$. Nori studied the category $\calC^N(X)$ of finite bundles on $X$ and he proved that it is a neutral Tannakian category over $k$ together with a neutral fiber functor $\omega_x:E\mapsto x^*E$  induced by a rational point $x\in X(k)$~\cite{dm82}. By Tannaka duality, there exists an affine group scheme $\pi_1^N(X,x)$ and the fiber functor $\omega_x$ induces an equivalence of tensor categories $\omega_x:\calC^N(X)\xrightarrow{\simeq}\Rep(\pi_1^N(X,x))$. The affine group scheme $\pi_1^N(X,x)$ is called \textit{the fundamental group scheme}, or \textit{Nori fundamental group}. Since each object in $\calC^N(X)$ is finite, it turns out that $\pi_1^N(X,x)$ is a profinite group scheme over $k$. Nori also gives a description of the inverse functor $\omega_x^{-1}$. Namely, there exists a pointed $\pi_1^N(X,x)$-torsor $(X_x^N,x^N)$ over $(X,x)$ such that
\begin{equation*}
\omega_x^{-1}(V,\rho)\simeq(X^N_x\times (V,\rho))/\pi_1^N(X,x).
\end{equation*}
The triple $(X^N_x,\pi_1^N(X,x),x^N)$ is a universal object in the category of all finite pointed torsors $(P,G,p)$ over $(X,x)$. Namely, for each finite group scheme $G$, the map
\begin{equation*}
\Hom(\pi_1^N(X,x),G)\to{{\rm Tors}}((X,x),G);\,\phi\mapsto ((X^N_x,x^N)\times (G,\phi))/\pi_1^N(X,x)
\end{equation*}
is bijective. Here ${{\rm Tors}}((X,x),G)$ stands for the set of isomorphism classes of pointed $G$-torsors over $(X,x)$. However, since now $k$ is an algebraically closed field of characteristic 0, any finite $G$-torsor having surjective  corresponding morphism $\pi_1^N(X,x)\to G$ is nothing but a finite connected Galois \'etale covering with group $G(k)$. Hence in this case, Nori fundamental group $\pi_1^N(X,x)$ is none other than the pro-constant group scheme associated to Grothendieck's geometric \'etale fundamental group $\pi_1(X,\varx)$~\cite{gr71}. In particular,  a vector bundle $E$ on $X$ is finite if and only if it is trivialized by some finite \'etale covering of $X$.

In \cite{no82}, Nori also studied a \textit{unipotent} variant of the above argument. Now we recall this. A vector bundle $E$ is said to be \textit{unipotent} if there exists a filtration
\begin{equation}\label{eq:filt}
E=E^0\supseteq E^1\supseteq\dots\supseteq E^n=0,
\end{equation}
in $\Coh(X)$ such that $E^{i}/E^{i+1}\simeq\calO_X$ for any $i$. Let $\calC^{\uni}(X,x)$ be the category of all unipotent bundles on $X$. Nori proved that for a proper geometrically connected and reduced scheme $X$ over a field $k$, the category $\calC^{\uni}(X)$ is a neutral Tannakian category over $k$ together with a neutral fiber functor $\omega_x=x^*$ and the corresponding affine group scheme $\pi_1^{\uni}(X,x)\Def\pi_1(\calC^{\uni}(X),\omega_x)$ controls all pointed unipotent torsors over $(X,x)$. 
We will call $\pi_1^{\uni}(X,x)$ the \textit{unipotent fundamental group}. In particular, in the case where $k$ is of characteristic 0, any unipotent affine algebraic group scheme over $k$ is uniquely determined by its Lie algebra, so we can see that the maximal abelian quotient $\pi_1^{\uni}(X,x)^{{\rm ab}}$ is uniquely determined by the vector space
\begin{equation*}
{{\rm Lie}}(\pi_1^{\uni}(X,x)^{{\rm ab}})\simeq\Hom(\pi_1^{\uni}(X,x),\mathbb{G}_a)^{\vee}\simeq H^1(X,\calO_X)^{\vee}.
\end{equation*}
Note that these isomorphisms are canonical. Furthermore, he showed that if $X$ is a curve with $\dim H^1(X,\calO_X)=g$, then there exists a non-canonical isomorphism
\begin{equation}\label{eq:freeness}
k[\pi_1^{\uni}(X,x)]^{\vee}\simeq\prod_{n=0}^{\infty}(H^1(X,\calO_X)^{\vee})^{\otimes n}\simeq k\llangle X_1,\dots X_g\rrangle.
\end{equation}

In \cite{ot15}, the author introduced the notion of \textit{semifinite bundles} on a proper geometrically-connected and reduced scheme $X$ over a field $k$ of characteristic 0. Here, a vector bundle $E$ on $X$ is said to be \textit{semifinite} if there exists a filtration as (\ref{eq:filt})
in the category $\Coh(X)$ such that each subquotient $E^{i}/E^{i+1}$ is finite. We denote by $\calC^{EN}(X)$ the category of semifinite bundles on $X$. If $X(k)\neq\emptyset$, again the category $\calC^{EN}(X)$ is a neutral Tannakian category over $k$ and each rational point $x\in X(k)$ gives a neutral fiber functor $\omega_x=x^*$~\cite[Proposition 2.14]{ot15}. Furthermore, from the definition, both $\calC^N(X)$ and $\calC^{\uni}(X)$ are full Tannakian subcategories of $\calC^{EN}(X)$ which are closed under taking subquotients in $\calC^{EN}(X)$. Therefore, the corresponding fundamental group  $\pi_1^{EN}(X,x)\overset{{\rm def}}{=}\pi_1(\calC^{EN}(X),x^*)$ has as a quotient both $\pi_1^N(X,x)$ and $\pi_1^{\uni}(X,x)$. In particular, we denote by $p^N$ the natural projection $\pi_1^{EN}(X,x)\twoheadrightarrow\pi_1^N(X,x)$. The kernel of $p^N$ was calculated in \cite[Section 4]{ot15}. Now we recall this. Let
\begin{equation}
(X^N_x,\pi_1^N(X,x),x^N)=\varprojlim_{S}(X_S,\pi(X,S,x),x_S)
\end{equation}
be the universal torsor associated to $(X,x)$~\cite[Section 2.2]{ot15}. Here $S$ runs over all finitely generated full Tannakian subcategories of $\calC^N(X)$. Note that if $k$ is an algebraically closed field of characteristic 0, then $(X^N_x,x^N)$ is none other than the universal \'etale covering of $(X,x)$. Now we have the following exact sequence:
\begin{equation}\label{eq:exact seq}
1\to\pi_1^{\uni}(X_x^N,x^N)\to\pi_1^{EN}(X,x)\xrightarrow{p^N}\pi_1^{N}(X,x)\to 1,
\end{equation}
where we write
\begin{equation}\label{eq:pi_1^E}
\pi_1^{\uni}(X_x^N,x^N)\overset{{\rm def}}{=}\varprojlim_S\pi_1^{\uni}(X_S,x_S).
\end{equation}
Furthermore, if $k$ is an algebraically closed field of characteristic 0, since $\pi_1^{\uni}(X_x^N,x^N)$ is pro-unipotent and $\pi_1^N(X,x)$ is pro-reductive, the above sequence always splits~\cite{hm69}. Now we fix a section $t$ of $p^N$. Then $t$ defines a conjugacy action of $\pi_1(X,\overline{x})=\pi_1^N(X,x)(k)$ on $\pi_1^{\uni}(X_x^N,x^N)$:
\begin{equation*}
\rho_t:\pi_1(X,\varx)\to\Aut(\pi_1^{\uni}(X_x^N,x^N))\simeq\Aut(k[\pi_1^{\uni}(X_x^N,x^N)]^{\vee}).
\end{equation*}
Let $X$ be a projective smooth curve over an algebraically closed field $k$ of characteristic 0 with genus $g>0$. The author shows that if $g=1$, then the above action $\rho_t$ is trivial, and that if $g\ge 2$, then the action $\rho_t$ is faithful~\cite[Theorem 4.12]{ot15}.
The aim of this paper is to determine the $\pi_1(X,\varx)$-module structure induced by $\rho_t$ on $k[\pi_1^{\uni}(X_x^N,x^N)]^{\vee}$. We claim the following:

\begin{thm}\label{thm:main}
If $X$ is a smooth projective curve defined over an algebraically closed field of characteristic 0 with genus $g>0$, then 
there exists a $\pi_1(X,\varx)$-module isomorphism:
\begin{equation*}
(k[\pi_1^{\uni}(X_x^N,x^N)]^{\vee},\rho_t)\simeq\prod_{n=0}^{\infty} \bigl(k\oplus (k[\pi_1(X,\varx)])^{\oplus g-1}\bigl)^{\otimes n}.
\end{equation*}
\end{thm}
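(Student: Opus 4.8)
The plan is to reduce the statement to its ``degree-one part'' and then propagate the answer through Nori's freeness result (\ref{eq:freeness}). Set $V\Def\Lie(\pi_1^{\uni}(X_x^N,x^N)^{\ab})$. Using $\pi_1^{\uni}(X_x^N,x^N)=\varprojlim_S\pi_1^{\uni}(X_S,x_S)$ together with the canonical isomorphisms $\Hom(\pi_1^{\uni}(X_S,x_S),\mathbb{G}_a)\simeq H^1(X_S,\calO_{X_S})$ recalled in the introduction, I would first identify
\[ V\simeq\varprojlim_S H^1(X_S,\calO_{X_S})^{\vee}, \]
the transition maps being dual to the pullbacks $H^1(X_S,\calO_{X_S})\to H^1(X_{S'},\calO_{X_{S'}})$ along $X_{S'}\to X_S$. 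The conjugation action $\rho_t$ preserves the augmentation filtration of $\pi_1^{\uni}(X_x^N,x^N)$, hence descends to an action on $V$, and I would check that under this identification it coincides with the action of the deck groups $G_S=\Gal(X_S/X)$ on the system $\{H^1(X_S,\calO_{X_S})\}$, assembled into an action of $\pi_1(X,\varx)=\varprojlim_S G_S$.

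The next step is to compute each $H^1(X_S,\calO_{X_S})$ as a $G_S$-module. Since $X_S\to X$ is a finite unramified Galois covering, the Chevalley-Weil formula reads
\[ H^0(X_S,\Omega_{X_S})\simeq k\oplus (k[G_S])^{\oplus g-1} \]
as $G_S$-representations, $k$ denoting the trivial module; the multiplicities are confirmed by Riemann-Hurwitz, $g_S-1=|G_S|(g-1)$. By Serre duality $H^1(X_S,\calO_{X_S})\simeq H^0(X_S,\Omega_{X_S})^{\vee}$, and as $k[G_S]$ and $k$ are self-dual, the same decomposition holds for $H^1(X_S,\calO_{X_S})$. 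I would then check compatibility with the inverse system: under pullback $H^1(X_S,\calO_{X_S})\hookrightarrow H^1(X_{S'},\calO_{X_{S'}})$ the trivial summand maps isomorphically and $k[G_S]$ is identified with the invariants $(k[G_{S'}])^{H}$, $H=\Ker(G_{S'}\to G_S)$; dually, the maps of $\{H^1(X_S,\calO_{X_S})^{\vee}\}$ become the canonical projections $k[G_{S'}]\to k[G_S]$. Passing to the limit gives, as $\pi_1(X,\varx)$-modules,
\[ V\simeq k\oplus (k[\pi_1(X,\varx)])^{\oplus g-1}, \]
with $k[\pi_1(X,\varx)]\Def\varprojlim_S k[G_S]$ the completed regular representation.

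Finally I would bootstrap from $V$ to the whole coordinate-ring dual. Nori's freeness (\ref{eq:freeness}), applied over the system, exhibits $k[\pi_1^{\uni}(X_x^N,x^N)]^{\vee}$ as the completed tensor algebra $\prod_{n=0}^{\infty}V^{\otimes n}$; since $\rho_t$ acts by Hopf-algebra automorphisms preserving the augmentation filtration, it is determined on the associated graded by its restriction to the generators $V$ and acts diagonally on tensor powers. Substituting the description of $V$ obtained above yields
\[ (k[\pi_1^{\uni}(X_x^N,x^N)]^{\vee},\rho_t)\simeq\prod_{n=0}^{\infty}\bigl(k\oplus(k[\pi_1(X,\varx)])^{\oplus g-1}\bigr)^{\otimes n}, \]
as claimed. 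The case $g=1$ is consistent: each factor collapses to $k$ with trivial action, recovering the triviality of $\rho_t$ from \cite[Theorem 4.12]{ot15}.

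I expect the main obstacle to lie not in the Chevalley-Weil input or the dimension bookkeeping, which are routine, but in two places. First, identifying the purely Tannakian action $\rho_t$, built from the abstract section $t$ of (\ref{eq:exact seq}), with the geometric action of the deck groups on cohomology; this requires unwinding how $t$ interacts with the functoriality of $\pi_1^{\uni}$ along the tower $\{X_S\}$. Second, upgrading the non-canonical isomorphism (\ref{eq:freeness}) to a $\pi_1(X,\varx)$-equivariant one compatible with the inverse limit, which I would handle by fixing an equivariant lift of the generators $V\hookrightarrow k[\pi_1^{\uni}(X_x^N,x^N)]^{\vee}$ and extending freely.
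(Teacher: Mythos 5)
Your overall strategy coincides with the paper's: reduce to the degree-one piece $V=\Lie(\pi_1^{\uni}(X_x^N,x^N)^{\ab})$, identify it level-by-level with $H^1(X_S,\calO_{X_S})^{\vee}$, apply Chevalley--Weil there, and then propagate through the completed tensor algebra using Nori's freeness (\ref{eq:freeness}) together with the equivariant identification $\frakm_S^n/\frakm_S^{n+1}\simeq(\frakm_S/\frakm_S^2)^{\otimes n}$. The Chevalley--Weil input, the Riemann--Hurwitz bookkeeping, the compatibility of the inverse system, and the bootstrap step are all handled the same way the paper handles them, and those parts of your argument are fine.

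The genuine gap is the step you yourself flag as ``the main obstacle'' and then do not carry out: the claim that, under the identification $\Lie(\pi_1^{\uni}(X_S,x_S)^{\ab})\simeq H^1(X_S,\calO_{X_S})^{\vee}$, the conjugation action $\rho^{\ab}$ coming from the section $t$ of (\ref{eq:exact seq}) coincides with the geometric action of $\Gal(X_S/X)$ on cohomology by pullback. This is precisely Proposition \ref{prop:main}, and it is where essentially all of the paper's work lies. The paper proves it by passing through the equivalence $\Qcoh(X_S)\simeq\Mod_X({\pi_S}_*\calO_{X_S})$ (Lemma \ref{lem:equivalence}), transporting everything to the Tannakian side as $\Mod_{\Rep(G)}(P)$ with $P=(k[H],\rho_{\rm reg})$, computing explicitly that the conjugation functor $\Phi_\sigma$ acts on a $P$-module $((V,\rho_V),\mu_V)$ as $({{\rm Res}}(c_\sigma)(V,\rho_V),{{\rm Res}}(\sigma^{-1})(\mu_V))$, and matching this with $\sigma^*$ on $\calC^{\uni}(X_S)$ via Lemma \ref{lem:equivalence}(2). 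Saying that one would ``unwind how $t$ interacts with the functoriality of $\pi_1^{\uni}$ along the tower'' names the problem but is not an argument; without some version of this computation the theorem is not proved, since a priori the Tannakian conjugation action on $\Lie(\pi_1^{\uni}(X_S,x_S)^{\ab})$ has no stated relation to the Galois module $H^1(X_S,\calO_{X_S})^{\vee}$. (A minor secondary point, present at the same level of detail in the paper: one should justify that the filtration of $A_S$ by powers of $\frakm_S$ splits $G_S$-equivariantly, which follows from semisimplicity of $k[G_S]$ in characteristic $0$; your phrasing only addresses the associated graded.)
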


\begin{rem}\label{rem:section}
In particular, Theorem \ref{thm:main} implies that the $\pi_1(X,\varx)$-module structure on $k[\pi_1^{\uni}(X_x^N,x^N)]^{\vee}$ induced by $\rho_t$ does not depend on the choice of the section $t$.
This fact is a consequence of the freeness of the unipotent fundamental group for a curve~(\ref{eq:freeness})~(cf.~Section 2). On the other hand, it seems that in the case where $\dim X\ge 2$, the Galois module structure on $k[\pi_1^{\uni}(X_x^N,x^N)]^{\vee}$ induced by $\rho_t$ depends on the choice of the section $t$.
\end{rem}

\begin{ack}
The author thanks Professor Takao Yamazaki for many clarifying discussions on the relation between his previous work and the Chevalley-Weil formula.
The author thanks Professor Takuya Yamauchi for suggesting many  examples of projective smooth higher dimensional varieties with infinitely abelian  fundamental group.  
The author is supported by JSPS, Grant-in-Aid for Scientific Research for JSPS fellows (16J02171).
\end{ack}


\section{The Chevalley-Weil Formula and Proof of Main Result}

Throughout this paper, $k$ always means an algebraically closed field of characteristic 0 and $X$ always means a projective smooth connected scheme defined over $k$ together with a fixed rational point $x\in X(k)$ and together with a fixed section $t$ of $p^N$~(cf.~(\ref{eq:exact seq})). 

We will deduce Theorem \ref{thm:main} from the \textit{Chevalley-Weil formula}~\cite{cw34}. To see the relation between them, we consider the abelianization of $\rho_t$:
\begin{equation}
\begin{aligned}
\rho^{{\rm ab}}:\pi_1(X,\varx)
&\xrightarrow{\rho_t}\Aut(\pi_1^{\uni}(X_x^N,x^N))\\
&\to\Aut(\pi_1^{\uni}(X_x^{N},x^N)^{{\rm ab}})
\simeq\Aut(\Lie(\pi_1^{\uni}(X_x^N,x^N)^{{\rm ab}})).
\end{aligned}
\end{equation}
Here, we write $\Lie(\pi_1^{\uni}(X_x^{N},x^N)^{\ab})\Def\varprojlim_S\Lie(\pi_1^{\uni}(X_S, x_S)^{\ab})$ and note that $\rho^{\ab}$ does not depend on the choice of the section $t$.
Hence, we obtain a $\pi_1(X,\varx)$-module
\begin{equation*}
\Lie(\pi_1^{\uni}(X_x^N,x^N)^{\ab})=(\Lie(\pi_1^{\uni}(X_x^{N}, x^N)^{\ab}),\rho^{{\rm ab}}).
\end{equation*}
Now let $X$ is a curve with genus $g>0$.
Then the following equation is the abelian version of Theorem \ref{thm:main}:
\begin{equation}\label{eq:main ab}
\Lie(\pi_1^{\uni}(X_x^N, x^N)^{\ab})\simeq k\oplus (k[\pi_1(X,\varx)])^{\oplus g-1}.
\end{equation}
However, $\rho^{{\rm ab}}$ is an inductive limit of subrepresentations:
\begin{equation}\label{eq:rho S ab}
\rho^{\ab}=\varinjlim_S\bigl(\Gal(X_S/X)\xrightarrow{\rho^{\ab}_S}
\Aut(\Lie(\pi_1^{\uni}(X_S, x_S)^{\ab}))\bigl)
\end{equation}
(cf. \cite[Proof of Theorem 4.12]{ot15}) where note that $\pi(X,S,x)(k)=\Gal(X_S/X)$, so for the proof of (\ref{eq:main ab}), it suffices to prove that for each $S$, the following isomorphism exists:
\begin{equation}\label{eq:main ab S}
\Lie(\pi_1^{\uni}(X_S, x_S)^{\ab})\simeq k\oplus (k[\Gal(X_S/X)])^{\oplus g-1}.
\end{equation}
On the other hand, since $\calO_{X_S}=\pi_S^*\calO_X$, where $\pi_S:X_S\to X$ is the natural projection, the cohomology  $H^1(X_S,\calO_{X_S})^{\vee}$ has a natural $\Gal(X_S/X)$-module structure. This Galois module structure is well-understood by the \textit{Chevalley-Weil formula}~\cite{cw34}:
\begin{equation}\label{eq:CW}
H^1(X_S,\pi_S^*\calO_X)^{\vee}\simeq k\oplus (k[\Gal(X_S/X)])^{\oplus g-1}.
\end{equation}
Therefore, for the proof of (\ref{eq:main ab S}), it suffices to show:

\begin{prop}\label{prop:main}
For any smooth proper connected scheme $X$ over an algebraically closed field $k$ of characteristic 0 and for each finitely generated Tannakian full subcategory $S$ of $\calC^N(X)$, there exists a $\Gal(X_S/X)$-module isomorphism:
\begin{equation*}
\Lie(\pi_1^{\uni}(X_S, x_S)^{\ab})\simeq H^1(X_S,\pi_S^*\calO_X)^{\vee}.
\end{equation*}
\end{prop}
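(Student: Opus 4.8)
The plan is to obtain the desired isomorphism as the $\Gal(X_S/X)$-equivariant refinement of Nori's canonical isomorphism
\begin{equation*}
\Lie(\pi_1^{\uni}(Y,y)^{\ab})\simeq H^1(Y,\calO_Y)^{\vee},
\end{equation*}
recalled in the introduction, applied to the pair $(Y,y)=(X_S,x_S)$, which is legitimate since $X_S$ is a finite \'etale cover of $X$ and hence proper smooth connected. Because $\calO_{X_S}=\pi_S^*\calO_X$, this already produces an isomorphism of the underlying $k$-vector spaces, so the entire content of the proposition is to check that it intertwines the two a priori different $\Gal(X_S/X)$-actions: the geometric action on $H^1(X_S,\calO_{X_S})^{\vee}$ coming from the deck transformations of $\pi_S$, and the action $\rho^{\ab}_S$ on $\Lie(\pi_1^{\uni}(X_S,x_S)^{\ab})$ coming from the splitting of (\ref{eq:exact seq}).

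First I would make Nori's isomorphism explicit and manifestly functorial. Recall that $\Hom(\pi_1^{\uni}(X_S,x_S),\mathbb{G}_a)$ is dual to $\Lie(\pi_1^{\uni}(X_S,x_S)^{\ab})$, and that an element corresponds to a self-extension $0\to\calO_{X_S}\to E\to\calO_{X_S}\to 0$ with class in $\mathrm{Ext}^{1}(\calO_{X_S},\calO_{X_S})=H^1(X_S,\calO_{X_S})$, the homomorphism to $\mathbb{G}_a$ being read off from the unipotent $\pi_1^{\uni}(X_S,x_S)$-action on the fiber $E_{x_S}$. For an automorphism $\sigma$ of $X_S$ over $X$, pullback sends $E$ to $\sigma^*E$, whose class is the geometric image $\sigma^*[E]\in H^1(X_S,\calO_{X_S})$, so the geometric action on cohomology is already visible; the task is to show that the same $\sigma^*$ induces $\rho^{\ab}_S$ on the Tannakian side.

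The heart of the argument, and the main obstacle, is that $\sigma$ need not fix $x_S$, so $\sigma^*$ does not literally act on representations of $\pi_1^{\uni}(X_S,x_S)$: the fiber of $\sigma^*E$ at $x_S$ is $E_{\sigma(x_S)}$, and comparing $\pi_1^{\uni}(X_S,x_S)$ with $\pi_1^{\uni}(X_S,\sigma(x_S))$ requires choosing an isomorphism of fiber functors $\omega_{x_S}\simeq\omega_{\sigma(x_S)}$, i.e. a \emph{path}. Two such choices differ by an element of $\pi_1^{\uni}(X_S,x_S)$, hence by an inner automorphism, so $\sigma$ determines an automorphism of $\pi_1^{\uni}(X_S,x_S)$ only as an \emph{outer} automorphism. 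Passing to the abelianization kills inner automorphisms, so one obtains a genuine, path-independent action of $\Gal(X_S/X)$ on $\pi_1^{\uni}(X_S,x_S)^{\ab}$, and hence on $\Hom(\pi_1^{\uni}(X_S,x_S),\mathbb{G}_a)$ (on which inner automorphisms act trivially because $\mathbb{G}_a$ is abelian). By construction this action matches $\sigma^*$ on $H^1(X_S,\calO_{X_S})$ under Nori's isomorphism, giving the claimed equivariance.

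Finally I would reconcile this deck-transformation action with $\rho^{\ab}_S$. The extension (\ref{eq:exact seq}) is the Tannakian incarnation of the Galois cover $(X_x^N,x^N)\to(X,x)$, and the conjugation action of $\pi_1^N(X,x)$ on the kernel $\pi_1^{\uni}(X_x^N,x^N)$ defined by the section $t$ is, exactly as for the topological sequence $1\to\pi_1(\tilde X)\to\pi_1(X)\to\Gal\to 1$, the lift-and-conjugate realization of the outer Galois action, which at the finite level $S$ is induced by the deck transformations of $X_S/X$. Since $\rho^{\ab}_S$ is the abelianization of this conjugation action and is independent of $t$, it coincides with the deck-transformation action identified above, and combining the three steps yields the $\Gal(X_S/X)$-equivariant isomorphism asserted in the proposition.
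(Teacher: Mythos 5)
Your reduction of the problem is the right one, and the first three steps are essentially correct: Nori's canonical isomorphism $\Lie(\pi_1^{\uni}(X_S,x_S)^{\ab})\simeq\Hom(\pi_1^{\uni}(X_S,x_S),\mathbb{G}_a)^{\vee}\simeq H^1(X_S,\calO_{X_S})^{\vee}$ is equivariant for the action of $\Aut(X_S/X)$ given by pulling back self-extensions of $\calO_{X_S}$, and your observation that the base-point ambiguity (the choice of a path from $x_S$ to $\sigma(x_S)$) costs only an inner automorphism, hence vanishes after abelianization, is a legitimate way to get a well-defined deck-transformation action on $\Lie(\pi_1^{\uni}(X_S,x_S)^{\ab})$. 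The gap is in your final paragraph. The identification of $\rho^{\ab}_S$ --- the conjugation action of $\sigma\in H(k)=\Gal(X_S/X)$ on $U=\pi_1^{\uni}(X_S,x_S)$ through the splitting $G=H\ltimes U$ of the sequence (\ref{eq:exact seq}) --- with the deck-transformation action $\sigma^*$ is \emph{asserted} by analogy with the topological homotopy exact sequence, not proved. In the Tannakian setting this is precisely the non-formal content of the proposition: the sequence (\ref{eq:exact seq}) and the equivalence $\Rep(U)\simeq\calC^{\uni}(X_S)$ are produced by abstract duality, and one must actually verify that ${\rm Res}(c_\sigma):\Rep(U)\to\Rep(U)$ is carried to $\sigma^*:\calC^{\uni}(X_S)\to\calC^{\uni}(X_S)$ under that equivalence. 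Once this is granted, everything else in your argument (and in the paper) is formal bookkeeping, so a proof that takes it for granted has not really proved anything.

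This missing verification is exactly what the paper's proof supplies. It rewrites $\calC^{\uni}(X_S)$ as the category $\Mod_{\overline{S}}({\pi_S}_*\calO_{X_S})$ of module objects over the pushed-forward structure sheaf (Lemma \ref{lem:equivalence}(1)), transports the conjugation functor ${\rm Res}(c_\sigma)$ across the functor $\alpha(V)=P\otimes V$ with $P=(k[H],\rho_{\rm reg})$ to an explicit functor $\Phi_\sigma$ on $\Mod_{\Rep(G)}(P)$, computes in (\ref{eq:Phi is Res}) that $\Phi_\sigma$ amounts to restricting the module structure map along $\sigma^{-1}$, and finally matches this with the geometric pullback $\sigma^*$ via Lemma \ref{lem:equivalence}(2). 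To close your gap you would need an equivalent computation, e.g.\ the torsor-theoretic one: writing an object of $\calC^{\uni}(X_S)$ as $(X_{\overline{S}}\times V)/U$ for $V\in\Rep(U)$ and checking, with all conventions pinned down, that $\sigma^*\bigl((X_{\overline{S}}\times V)/U\bigr)\simeq(X_{\overline{S}}\times {\rm Res}(c_{t(\sigma)})(V))/U$. Without some such verification the proposal is an accurate outline rather than a proof.
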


In fact, in the case where $X$ is a curve over $k$,  Proposition \ref{prop:main} implies Theorem \ref{thm:main}:

\begin{proof}[Proof of Proposition \ref{prop:main} $\Rightarrow$ Theorem \ref{thm:main}]
Let $X$ be a projective smooth connected curve over $k$ of genus $g>0$. As (\ref{eq:rho S ab}), also $\rho$ can be written as an  inductive limit of subrepresentations:
\begin{equation}\label{eq:rho S}
\rho_t=\varinjlim_S\bigl(\Gal(X_S/X)
\xrightarrow{\rho_S}\Aut(A_S)\bigl),
\end{equation}
where we write $A_S\Def k[\pi_1^{\uni}(X_S,x_S)]^{\vee}$~(cf.~\cite[Proof of Theorem 4.12]{ot15}). Therefore, for Theorem \ref{thm:main}, it suffices to show that for each $S$, we have:
\begin{equation}\label{eq:A_S,EN}
A_{S}\simeq \prod_{n=0}^{\infty}\bigl(k\oplus(k[\Gal(X_S/X)])^{\oplus g-1}\bigl)^{\otimes n}.
\end{equation}
However, note that there exists a canonical isomorphism of vector spaces over $k$ as below:
\begin{equation}\label{eq:decomp}
A_{S}\simeq \varprojlim_n A_S/\frakm_S^n\simeq
\prod_{n=0}^{\infty}\mathfrak{m}_S^n/\mathfrak{m}_S^{n+1}.
\end{equation}
Here we denote by $\frakm_S$ the unique maximal ideal of $A_S$.
On the other hand, since $\rho_S$ gives $k$-algebra automorphisms of $A_S$ and since $\Dim X_S=1$, by the freeness of $A_S$~(cf.~(\ref{eq:freeness})), there exist  canonical isomorphisms of $\Gal(X_S/X)$-modules:
\begin{equation*}
\Lie(\pi_1^{\uni}(X_S,x_S)^{\ab})^{\otimes n}\xrightarrow{\simeq}(\frakm_S/\frakm^2_S)^{\otimes n}\xrightarrow{\simeq}\frakm_S^n/\frakm_S^{n+1}\, (n>0)
\end{equation*}
and the action of $\Gal(X_S/X)$ on $A_S$ preserves the decomposition (\ref{eq:decomp}).
Therefore, Proposition \ref{prop:main} and the Chevalley-Weil formula (\ref{eq:CW}) imply (\ref{eq:A_S,EN}), whence Theorem \ref{thm:main}.
\end{proof}

It remains to show Proposition \ref{prop:main}. 
Let $X/k$ be a smooth proper connected scheme. 

\begin{lem}\label{lem:equivalence}
(1) If $\pi:P\to X$ is an affine morphism, then we have an equivalence of categories
\begin{equation*}
\Qcoh(P)\simeq \Mod_X(\pi_*\calO_P),
\end{equation*}
where $\Mod_X(\pi_*\calO_P)$ stands for the category of $\pi_*\calO_P$-module objects in $\Qcoh(X)$.

(2) Furthermore, if $\pi:P\to X$ is an \'etale Galois covering with group $G$, then under the identification $G=\Aut(P/X)\simeq\Aut_{\text{$\calO_X$-alg}}(\pi_*\calO_P)^{{\rm op}},\,\sigma\leftrightarrow\sigma^{-1}$, for each $\sigma\in G$, the following diagram is commutative:
\begin{equation*}
\begin{CD}
\Qcoh(P)@>\simeq >>\Mod_X(\pi_*\calO_P)\\
@V\sigma^* VV @VV{{\rm Res}}(\sigma^{-1})V\\
\Qcoh(P)@>\simeq >>\Mod_X(\pi_*\calO_P).
\end{CD}
\end{equation*}
\end{lem}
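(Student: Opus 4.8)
The plan is to treat (1) as the standard relative-$\Spec$ equivalence and to deduce (2) by following a single quasi-coherent sheaf through that equivalence on affine charts; the only real content is the bookkeeping of variance hidden in the identification $G\simeq\Aut_{\calO_X\text{-alg}}(\pi_*\calO_P)^{\mathrm{op}}$.

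For (1), since $\pi$ is affine one has $P\simeq\underline{\Spec}_X(\mathcal A)$ with $\mathcal A\Def\pi_*\calO_P$, and I would take the functor $\Qcoh(P)\to\Mod_X(\mathcal A)$ to be $\mathcal F\mapsto\pi_*\mathcal F$: pushing forward the $\calO_P$-action turns $\pi_*\mathcal F$ into a $\pi_*\calO_P$-module object of $\Qcoh(X)$. The assertion is local on $X$, so over an affine open $U=\Spec R$ with $\pi^{-1}(U)=\Spec A$ and $A=\mathcal A(U)$ an $R$-algebra it becomes the familiar equivalence $\Qcoh(\Spec A)\simeq\Mod(A)$ between quasi-coherent sheaves and $A$-modules; an $A$-module over $U$ is the same datum as a $\pi_*\calO_P$-module object of $\Qcoh(X)$ over $U$, and the quasi-inverse is $M\mapsto\widetilde M$. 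These local equivalences commute with restriction to smaller affines, so they glue, and I would cite the relative $\Spec$ formalism for the remaining routine checks.

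For (2), it suffices to treat one $\sigma\in G=\Aut(P/X)$ and one $\mathcal F=\widetilde M$ on a chart $\pi^{-1}(U)=\Spec A$, since both functors and the identification are natural in $U$. Let $\phi_\sigma\colon A\to A$ be the $R$-algebra comorphism of $\sigma|_{\Spec A}$; contravariance of $\Spec$ yields $\phi_{\sigma\tau}=\phi_\tau\circ\phi_\sigma$, which is the anti-homomorphism underlying $G\simeq\Aut_{\calO_X\text{-alg}}(\mathcal A)^{\mathrm{op}}$, and globally the family $\{\phi_\sigma\}$ is $\pi_*(\sigma^\#)$. Because $\sigma$ is an isomorphism over $X$ we have $\sigma^*=(\sigma^{-1})_*$ together with $\pi\sigma=\pi$, so $\pi_*(\sigma^*\mathcal F)$ and $\pi_*\mathcal F$ have the same underlying $\calO_X$-module and can differ only in their $\mathcal A$-action. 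The pullback formula along $\Spec(\phi_\sigma)$ gives $\sigma^*\widetilde M=\widetilde{A\otimes_{\phi_\sigma,A}M}$, and since $\phi_\sigma$ is invertible the relation $\phi_\sigma(b)\otimes m=1\otimes bm$ identifies this with $M$ equipped with the twisted action $a\cdot m=\phi_\sigma^{-1}(a)\,m=\phi_{\sigma^{-1}}(a)\,m$. This is exactly restriction of scalars along $\phi_{\sigma^{-1}}$, that is $\mathrm{Res}(\sigma^{-1})$ under the identification $\sigma\leftrightarrow\sigma^{-1}$, and the isomorphism is manifestly functorial in $\mathcal F$, so the square commutes.

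The step I expect to be the main obstacle is precisely this variance bookkeeping: confirming that pullback along $\sigma$ produces restriction of scalars along $\phi_{\sigma^{-1}}$ and not $\phi_\sigma$, and that the resulting assignment $\sigma\mapsto\mathrm{Res}(\sigma^{-1})$ is a genuine group action compatible with the anti-homomorphism $\sigma\mapsto\phi_\sigma$ and the $\mathrm{op}$. Once the inverses and the $\mathrm{op}$ are pinned down, everything else is formal naturality and gluing, which I would leave to the relative $\Spec$ machinery rather than spell out locally.
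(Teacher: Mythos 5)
Your proposal is correct and follows essentially the same route as the paper: part (1) is the standard affine-local (relative $\Spec$) equivalence, which the paper likewise establishes by reducing to the module-theoretic statement $\Mod(B)\simeq\Mod_A(B_A)$, and part (2) comes down to the identity $\pi_*\sigma^*E\simeq{\rm Res}(\sigma^{-1})(\pi_*E)$, which the paper merely asserts and you verify explicitly on charts via $\sigma^*\widetilde M=\widetilde{A\otimes_{\phi_\sigma,A}M}\simeq{\rm Res}(\phi_{\sigma^{-1}})(M)$. Your variance bookkeeping (pullback along $\sigma$ giving restriction along $\phi_{\sigma^{-1}}$, compatibly with the anti-homomorphism $\sigma\mapsto\phi_\sigma$ and the ${\rm op}$) is exactly the content of the paper's identification, so nothing is missing.
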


\begin{proof}
(1) It suffices to show that for any $k$-algebra homomorphism $A\to B$, there exists an equivalence of categories:
$\Mod(B)\simeq \Mod_A(B_A)$. Here for each $M\in\Mod(B)$, we write $M_A\Def{{\rm Res}}(\phi)(M)\in\Mod(A)$. Similar for each morphism in $\Mod(B)$. For each $M\in\Mod(B)$, let $\mu_M:B\otimes_k M\to M$ be the structure homomorphism. Then the associativity of $\mu_M$ implies that $\mu_M$ is a $B$-linear homomorphism. Thus we obtain a $A$-linear homomorphism
$(\mu_M)_A:(B\otimes_k M)_A\to M_A$.
This factors uniquely through the natural surjection $(B\otimes_k M)_A\twoheadrightarrow B_A\otimes_A M_A$ and the induced $A$-linear homomorphism $\mu_{M_A}:B_A\otimes_A M_A\to M_A$ makes $M_A$ a $B$-module object in $\Mod(A)$. We define a functor $\Phi:\Mod(B)\to\Mod_A(B_A)$ by $\Phi((M,\mu_M))\Def (M_A,\mu_{M_A})$. On the other hand, let $N\in\Mod_A(B_A)$ and assume that the $A$-linear homomorphism  $\mu_N:B_A\otimes_A N\to N$ gives a $B_A$-module structure on $N$. Then the composition
$B\otimes_k N\twoheadrightarrow B_A\otimes_A N\xrightarrow{\mu_N}N$ makes $N$ a $B$-module, hence we obtain a functor $\Psi:\Mod_A(B_A)\to\Mod(B)$. By construction, we have $\Phi\circ\Psi={{\rm id}}$ and $\Psi\circ\Phi={{\rm id}}$. Therefore $\Phi$ is an equivalence of categories.

(2) This follows from the fact that $\pi_*\sigma^*E\simeq {{\rm Res}(\sigma^{-1})}(\pi_*E)$ in $\Mod_X(\pi_*\calO_P)$ for each $E\in\Qcoh(P)$.
\end{proof}

\begin{proof}[Proof of Proposition \ref{prop:main}]
Let $S\subset\calC^N(X)$ be a finitely generated Tannakian full subcategory of $\calC^N(X)$ and $\overline{S}\subset\calC^{EN}(X)$ the subcategory obtained by successive extensions of finite bundles in $S$. To ease of notation, we will simply write $H,G$ and $U$ for $\pi(X,S,x),\pi(X,\overline{S},x)$ and $\pi_1^{\uni}(X_S,x_S)$, respectively. By Lemma \ref{lem:equivalence}(1), we obtain the following commutative diagram~(cf.~\cite[Remark 4.8]{ot15}):
\begin{equation*}
\begin{xy}
\xymatrix{
              && \Mod_{\overline{S}}({\pi_S}_*\calO_{X_S})\\
S\ar @{^{(}-{>}} [r]\ar[d]^{\simeq}&  \overline{S}\ar[r]_{{\pi_S}^*\quad}\ar[d]^{\simeq}\ar[ru]^{\alpha'}&  \calC^{\uni}(X_S)\ar[u]_{\simeq}\ar[d]^{\simeq}\\
\Rep(H)\ar @{^{(}-{>}} [r]&  \Rep(G)\ar[r]\ar[rd]_{\alpha}&  \Rep(U)\ar[d]^{\simeq}\\
                        &                                      &  \Mod_{\Rep(G)}(P).
}
\end{xy}
\end{equation*}
Here $\Mod_{\overline{S}}({\pi_S}_*\calO_{X_S})$ stands for the category of ${\pi_S}_*\calO_{X_S}$-module objects in $\overline{S}$. Similar for $\Mod_{\Rep(G)}(P)$, where $P\Def(k[H],\rho_{{\rm reg}})\in\Rep(H)$, which corresponds to ${\pi_S}_*\calO_{X_S}\in S$ via the equivalence $\Rep(H)\simeq S$. Since for each $E\in \overline{S}$, we have $\alpha'(E')={\pi_S}_*\pi_S^*E\simeq{\pi_S}_*\calO_{X_S}\otimes E$, the functor $\alpha$ is given by $\alpha(V)=P\otimes V$.
Now let $G=H\ltimes U$ and fix $\sigma\in H(k)=\Gal(X_S/X)$. Then the conjugation  action $c_\sigma$ on $G$ induces the action $\Phi_\sigma$ on $\Mod_{\Rep(G)}(P)$ which makes the following diagram commute:
\begin{equation*}
\begin{CD}
\Rep(H)@>>>\Rep(G)@>\alpha>>\Mod_{\Rep(G)}(P)\\
@V{{\rm Res}}(c_\sigma)VV @V{{\rm Res}}(c_\sigma)VV @VV\Phi_{\sigma}V\\
\Rep(H)@>>>\Rep(G)@>\alpha>>\Mod_{\Rep(G)}(P).
\end{CD}
\end{equation*}
We claim that for each $((V,\rho_V),\mu_V:P\otimes V\to  V)\in\Mod_{\Rep(G)}(P)$, we have
\begin{equation}\label{eq:Phi is Res}
\Phi_\sigma((V,\rho_V),\mu_V)=({{\rm Res}}(c_\sigma)(V,\rho_V),{{\rm Res}}(\sigma^{-1})(\mu_V)).
\end{equation}
First note that there exists an isomorphism of algebra objects in $\Rep(G)$:
\begin{equation*}
P_{\sigma}\Def{{\rm Res}}(c_{\sigma})(P)\xleftarrow[\simeq]{\sigma^{-1}}P
\end{equation*}
and the functor ${{\rm Res}}(c_{\sigma})$ induces the trivial action on Hom-sets in ${\Rep(G)}$. Now the composition
\begin{equation}\label{eq:def of Phi_sigma}
\begin{aligned}
P\otimes({{\rm Res}}(c_\sigma)(V,\rho_V))&\xrightarrow[\simeq]{\sigma^{-1}\otimes{{\rm id}}}
P_{\sigma}\otimes{{\rm Res}}(c_\sigma)(V,\rho_V)\\
&={{\rm Res}}(c_{\sigma})(P\otimes(V,\sigma_V))\xrightarrow{{{\rm Res}}(c_\sigma)(\mu_V)=\mu_V}{{\rm Res}}(c_\sigma)(V,\rho_V)
\end{aligned}
\end{equation}
gives a $P$-module structure on $\Phi_\sigma((V,\rho_V),\mu_V)$, but the morphism (\ref{eq:def of Phi_sigma}) is nothing but ${{\rm Res}}(\sigma^{-1})(\mu_V)$, whence the equality (\ref{eq:Phi is Res}). Therefore, by Lemma \ref{lem:equivalence}(2), the diagram
\begin{equation}\label{eq:Phi sigma*}
\begin{CD}
\Mod_{\Rep(G)}(P)@>\simeq>>\calC^{\uni}(X_S)\\
@V\Phi_\sigma VV @VV\sigma^*V\\
\Mod_{\Rep(G)}(P)@>\simeq>>\calC^{\uni}(X_S)
\end{CD}
\end{equation}
commutes. Since the action of $\sigma\in\Gal(X_S/X)$ on $H^1(X_S,\pi_S^*\calO_X)$ is induced by the pull-back functor $\sigma^*$, the commutativity of the diagram (\ref{eq:Phi sigma*}) implies  Proposition \ref{prop:main}.
\end{proof}

This completes the proof of Theorem \ref{thm:main}.


\section{Remarks}

\begin{rem}
If $X$ is an abelian variety over $k$ of dimension $g>0$ and $n>0$ is an integer, then the multiplication map $n:X\to X$ gives a finite Galois \'etale covering of $X$ with group $G\simeq(\mathbb{Z}/n\mathbb{Z})^{\oplus 2g}$. Each element $\sigma\in G$ acts on $X$ by a parallel transformation,  hence the action of $\sigma$ on $H^1(X,\calO_X)$ is trivial because for any unipotent bundle $E$ on an abelian variety and for any point $a\in X(k)$, we have $T_a^*E\simeq E$~\cite[Theorem 4.17]{mu78}, where $T_a:X\to X$ is the parallel transformation defined by $p\mapsto p+a$.

This fact on an abelian variety can be extended to more general setting. Namely, if $X$ is a projective smooth connected scheme defined over an algebraically closed field of characteristic 0  with $\pi_1(X,\varx)$ abelian, then for any finite Galois \'etale covering $\pi:P\to X$ with group $G$, the $G$-module structure on $H^1(P,\pi^*\calO_X)$ is trivial. 
Indeed, by the Lefschetz principle, we may assume that $k=\mathbb{C}$.  By Malcev-Grothendieck's theorem~\cite{ma40}\cite{gr70}, the commutativity of $\pi_1(X,\varx)$ implies that the topological fundamental group $\pi_1^{{\rm top}}(X(\mathbb{C}),x)$ is commutative. Then by \cite[Corollary 1.5]{la11}, the $S$-fundamental group $\pi_1^S(X,x)$ is also abelian, hence so is $\pi_1^{EN}(X,x)$. (Note that $\pi_1^{EN}(X,x)$ is a quotient of $\pi_1^S(X,x)$.) Thus, by \cite[9.5 Theorem]{wa79}, we obtain $\pi_1^{EN}(X,x)\simeq\pi_1^{N}(X,x)\times\pi_1^{\uni}(X,x)$. This implies that $\pi_1^{\uni}(X_x^N,x^N)\xrightarrow{\simeq}\pi_1^{\uni}(X,x)$, whence the equivalence $\pi^*:\calC^{\uni}(X)\xrightarrow{\simeq}\calC^{\uni}(P)$~(in particular, $\pi^*:H^1(X,\calO_X)\xrightarrow{\simeq} H^1(P,\calO_P)$), and that the action $\rho_t:\pi_1(X,\varx)\to\Aut(k[\pi_1^{\uni}(X,x)]^{\vee})$ is trivial. By  Proposition \ref{prop:main}, the triviality of $\rho_t$ deduces the fact that the $G$-module structure on $H^1(P,\pi^*\calO_X)$ is trivial.
\end{rem}

\begin{rem}
Let $X$ be a projective smooth connected curve over an algebraically closed field $k$ of characteristic 0 with genus $g\ge 2$. Let $\pi:P\to X$ be a finite Galois \'etale covering with group $G$. We have seen that the conjugation action $\rho_t$ endows each $\frakm_P^n/\frakm_P^{n+1}$ with the structure of $G$-module and that if $n=1$, then this $G$-module structure coincides with the natural one on $H^1(P,\pi^*\calO_X)^{\vee}$. In fact this observation can be valid also for $n\ge 2$. Indeed, we claim that there exist $G$-module isomorphisms
\begin{equation}\label{eq:V_n}
\frakm_P^n/\frakm_P^{n+1}\simeq H^1(P,\pi^*(V_n\otimes\Omega_{X/k}^1)^{\vee})^{\vee}\quad(n>0).
\end{equation}
Here, the sheaves $V_n\in\Coh(X)$ are defined by the following way. For each integer $n>0$, let $U_n$ be the \textit{n-th universal extension} in $\calC^{\uni}(P)$ with $U_1=\calO_P$~\cite[Chapter IV]{no82}. Then the Galois group $G$ acts on the sheaf $\Omega_{P/k}^1\otimes U_n=\pi^*\Omega_{X/k}^1\otimes U_n$ via the action on $\pi^*\Omega_{X/k}^1$, so we obtain a sheaf $V_n\Def (\Omega^1_{P/k}\otimes U_n)^G$ over $X$. Note that all the $V_n$ are locally free sheaves. Now we prove (\ref{eq:V_n}). 
By Theorem \ref{thm:main}, we have
\begin{equation*}
\frakm_P^n/\frakm_P^{n+1}\simeq (\frakm_P/\frakm_P^2)^{\otimes n}\simeq \bigl(k\oplus (k[G]^{\oplus g-1})\bigl)^{\otimes n}\quad (n>0),
\end{equation*}
so it suffices to show that
\begin{equation}\label{eq:CW V_n}
H^1(P,\pi^*(V_n\otimes\Omega_{X/k}^1)^{\vee})\simeq \bigl(k\oplus (k[G]^{\oplus g-1})\bigl)^{\otimes n}\quad (n>0).
\end{equation}
From the Serre duality and a basic property of the universal extensions, we find that
\begin{equation*}
\begin{aligned}
H^0(P,\pi^*(V_n\otimes\Omega_{X/k}^1)^{\vee})&\simeq H^1(P,\pi^*V_n)^{\vee}=H^1(P,\Omega_{P/k}^1\otimes U_n)^{\vee}\\
&\simeq H^0(P,U_n^{\vee})\simeq H^0(P,\calO_P)=k
\end{aligned}
\end{equation*}
is a trivial $G$-module.
So we can adopt the proof of \cite[Theorem 4]{na84}. Then, by the semi-simplicity of $k[G]$, we find that there exists a $G$-module isomorphism
\begin{equation}\label{eq:CW V_n 2}
H^1(P,\pi^*(V_n\otimes \Omega^1_{X/k})^{\vee})\simeq k\oplus (k[G])^{\oplus  (g_P^n-1)/n},
\end{equation}
where $g_P\Def\dim H^1(P,\calO_P)$.
By the Hurwitz genus formula, we can find that (\ref{eq:CW V_n 2}) implies (\ref{eq:CW V_n}). Therefore, we obtain the isomorphisms (\ref{eq:V_n}).
\end{rem}


\small

\end{document}